\newtheorem{theorem}{Theorem}[section]
\newtheorem{corollary}[theorem]{Corollary}
\theoremstyle{definition}
\newtheorem{definition}[theorem]{Definition}
\newtheorem{conjecture}[theorem]{Conjecture}
\newtheorem{remark}[theorem]{Remark}
\newcommand{\h}{\mathfrak{h}}
\newcommand{\ben}{\begin{enumerate}}
\newcommand{\een}{\end{enumerate}}
\theoremstyle{plain}
\newtheorem*{sol}{Solution}
\theoremstyle{definition}
\theoremstyle{remark}
\newcommand{\solu}[1]{\begin{sol}{\bf (\ref{#1})}}
\def\h{\mathfrak{h}}
\begin{document}

\title{Proof of the Brou\'e-Malle-Rouquier conjecture in characteristic zero\\ (after I. Losev and I. Marin --- G. Pfeiffer)}

\author{Pavel Etingof}
\address{Department of Mathematics, Massachusetts Institute of Technology,
Cambridge, MA 02139, USA}
\email{etingof@math.mit.edu}

\maketitle

The goal of this note is to explain a proof of the Brou\'e-Malle-Rouquier conjecture (\cite{BMR}, p.178), stating 
that the Hecke algebra of a finite complex reflection group $W$ is free of rank $|W|$ over the algebra of parameters, 
over a field of characteristic zero. This result is not original --- it follows immediately from the results of \cite{L}, \cite{MP}, and \cite{ER}, 
but it does not seem to have been stated explicitly in the literature, so we state and prove it for future reference. 

We note that there have been a lot of results on this conjecture for particular complex reflection groups, reviewed in \cite{M3}, e.g. \cite{A,AK,M1,M2}; we are not giving the full list of references here. 

\section{The main result} 

Let $V$ be a finite dimensional complex vector space, and $W\subset GL(V)$ a finite complex reflection group, i.e., $W$ is generated by 
complex reflections (elements $s$ such that ${\rm rank}(1-s)=1$). Let $S\subset W$ be the set of reflections, and 
$V_{\rm reg}:=V\setminus \cup_{s\in S}V^s$. Then by Steinberg's theorem, $W$ acts freely on $V_{\rm reg}$. 
Let $x\in V_{\rm reg}/W$ be a base point. The {\it braid group} of $W$ is the group $B_W:=\pi_1(V_{\rm reg}/W,x)$. 
We have a surjective homomorphism $\pi: B_W\to W$ (corresponding to gluing back the reflection hyperplanes $V^s$), 
and ${\rm Ker}\pi$ is called the {\it pure braid group} of $W$, denoted by $PB_W$. For each $s\in S$, let $T_s\in B_W$
be a path homotopic to a small circle around $V^s$ (it is defined uniquely up to conjugation). Also let $n_s$ be the order of $s$. 
Then $T_s^{n_s}\in PB_W$, and by the Seifert-van Kampen theorem, $PB_W$ is the normal closure of the subgroup of $B_W$ 
generated by $T_s^{n_s}$, $s\in S$. In other words, $W$ is the quotient of $B_W$ by the relations 
$T_s^{n_s}=1$, $s\in S$. 

Let $u_{s,i}$, $i=1,...,n_s$, be variables such that $u_{s,i}=u_{t,i}$ if $s$ is conjugate to $t$ in $W$. 
Let $R:=\Bbb Z[u_{s,i}^{\pm 1}, s\in S, i\in [1,n_s]]$. 

\begin{definition} (\cite{BMR}) The Hecke algebra $H(W)$ is the quotient of the group algebra $RB_W$
by the relations 
$$
\prod_{i=1}^{n_s}(T_s-u_{s,i})=0,\ s\in S.
$$
\end{definition} 

\begin{conjecture}(\cite{BMR}, p.178)  $H(W)$ is a free $R$-module of rank $|W|$.  
\end{conjecture}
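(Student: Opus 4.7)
The plan is to establish the two halves of the conjecture separately: a spanning upper bound and a representation-theoretic lower bound. Concretely, I would first construct an $R$-linear surjection $R^{|W|}\twoheadrightarrow H(W)$, showing that the rank of $H(W)\otimes_R K$ over $K := \mathrm{Frac}(R)$ is at most $|W|$; and I would then construct, for a Zariski-dense set of complex specialisations of the parameters $u_{s,i}$, a representation of the corresponding specialised Hecke algebra of dimension at least $|W|$, showing by upper semicontinuity of fibre dimension that the generic rank is at least $|W|$. Once the two bounds match, the kernel $N$ of the surjection satisfies $N\otimes_R K = 0$; but $N$ is a submodule of a free module over a domain and hence torsion-free, so $N = 0$ and $H(W)\cong R^{|W|}$ as an $R$-module.

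For the spanning step I would appeal to the work of Marin and Pfeiffer \cite{MP}, which --- building on the earlier case-by-case progress surveyed in \cite{M3} --- exhibits, for every irreducible complex reflection group $W$, an explicit set of $|W|$ elements of $B_W$ whose images span $H(W)$ over $R$. The Coxeter (real reflection) case is classical and comes from the length function on $W$; the infinite series $G(de,e,n)$ is handled via suitable analogues of the Coxeter normal form; and the remaining exceptional groups in the Shephard--Todd classification are treated one-by-one, sometimes with computer assistance.

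For the faithfulness step I would use rational Cherednik algebras and the Knizhnik--Zamolodchikov monodromy functor. Given a generic complex parameter $c$, one considers category $\mathcal{O}$ for the rational Cherednik algebra $H_c(W)$; the KZ functor, whose basic properties in the required generality are developed in \cite{ER}, and whose essential image and dimension-preserving behavior are refined in Losev \cite{L}, sends a suitable standard module (for example the polynomial representation, of complex dimension $|W|$) to a $B_W$-representation on a space of dimension $|W|$ that factors through the Hecke algebra specialised at a value of $u$ determined by $c$. As $c$ ranges over a Zariski-dense subset of the parameter space, so does $u$, and combined with upper semicontinuity of fibre dimension this yields the required lower bound on the generic rank.

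The main obstacle is the faithfulness step. Spanning sets of size $|W|$ had been produced for many complex reflection groups well before the BMR conjecture was settled, so while the upper bound of \cite{MP} is intricate, it is essentially a combinatorial finite check. By contrast, producing in a uniform way a representation of the generic Hecke algebra of dimension exactly $|W|$ requires a genuinely new tool --- the rational Cherednik algebra together with its KZ functor --- to exclude any unexpected collapse of $H(W)$ at generic parameters; this is precisely the ingredient whose development made the conjecture accessible.
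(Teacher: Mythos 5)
Your overall reduction is sound and is in fact the one already observed in \cite{BMR}: if $H(W)$ is spanned over $R$ by $|W|$ elements and the generic fibre has dimension at least $|W|$, then the kernel of $R^{|W|}\twoheadrightarrow H(W)$ is a torsion-free module that dies at the generic point, hence vanishes. The genuine gap is in your spanning step. Marin--Pfeiffer \cite{MP} do \emph{not} produce a $|W|$-element spanning set for every irreducible $W$; they treat the $2$-reflection groups among the exceptional groups of rank $\ge 3$ (together with earlier work this covers $G_{24},G_{25},G_{26},G_{27},G_{29},G_{31},G_{32},G_{33},G_{34}$, the Coxeter groups, and the infinite series). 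For the rank-$2$ exceptional groups $G_4,\dots,G_{22}$ no such spanning set was available in the generality you need --- and for $G_{17},\dots,G_{21}$ the conjecture over $R$ itself (i.e.\ the integral statement you set out to prove) is still open. What \cite{ER} provides for rank $2$ is only finite generation of $H(W)$ over $R$ by \emph{some} unspecified finite number of elements, which is strictly weaker than your surjection $R^{|W|}\twoheadrightarrow H(W)$. The paper compensates for this weaker input with Losev's theorem \cite{L}: for every specialisation $q:R\to\Bbb C$ the maximal finite-dimensional quotient of $H_q(W)$ has dimension exactly $|W|$, so finite generation forces $\dim_{\Bbb C}H_q(W)=|W|$ at every point; constancy of fibre dimension then gives projectivity of rank $|W|$, and Swan's theorem upgrades projective to free. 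Note that this route only yields the statement over a field of characteristic zero (or over $\Bbb Z[1/N]$), not over $R$ itself.

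You have also inverted where the difficulty lies. The lower bound is the previously known part: \cite{BMR} and \cite{GGOR} construct, via the monodromy of the KZ connection, a surjection of $H_q(W)$ onto a quotient algebra $H_q'(W)$ of dimension $|W|$, which immediately gives $\dim H_q(W)\ge |W|$ everywhere (and note that merely exhibiting a \emph{representation} on a $|W|$-dimensional space, as you propose, bounds nothing --- its image could be a proper subalgebra of $\End(\Bbb C^{|W|})$ of small dimension; you need the $|W|$-dimensional quotient \emph{algebra}). Losev's contribution is the opposite inequality: every finite-dimensional representation of $H_q(W)$ factors through $H_q'(W)$. Finally, the KZ functor is developed in \cite{GGOR}, not in \cite{ER}, which concerns central extensions of preprojective algebras and is used only for the finite-generation statement in rank $2$.
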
 

This conjecture is currently known for all irreducible complex reflection groups except $G_{17},...,G_{21}$ (according to the Shephard-Todd classification), and there is a hope that these cases can be proved as well using a sufficiently powerful computer (see \cite{Cha1,Cha2} and \cite{M3} for more details). Also, it is shown in \cite{BMR} that to prove the conjecture, it suffices to show that $H(W)$ is spanned by $|W|$ elements.  

Our main result is 

\begin{theorem}\label{main} 
If $K$ is a field of characteristic zero then $K\otimes_{\Bbb Z} H(W)$ is a free module over $K\otimes_{\Bbb Z} R$ of rank $|W|$. 
In particular, if $q: R\to K$ is a homomorphism, then the specialization $H_q(W):=K\otimes_R H(W)$ is a $|W|$-dimensional $K$-algebra. 
\end{theorem}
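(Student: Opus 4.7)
My approach is to split the proof into an upper bound and a lower bound on the rank of $M:=K\otimes_\mathbb{Z} H(W)$ over $A:=K\otimes_\mathbb{Z} R$. For the upper bound, as recorded in \cite{BMR}, freeness will follow from exhibiting $|W|$ elements that span $H(W)$ over $R$. Using the Shephard--Todd classification I would reduce to irreducible $W$ and then combine the classical basis for real reflection groups, the Ariki--Koike and Brou\'e--Malle basis for the infinite family $G(de,e,r)$, and the work of Marin--Pfeiffer \cite{MP} which settles the exceptional cases not previously covered. This is the first of the three ingredients in the proof.

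For the lower bound I would show that the generic rank of $M$ over $A$ is at least $|W|$. The tool is the rational Cherednik algebra $H_c(W)$ and its category $\mathcal{O}_c$, together with the Knizhnik--Zamolodchikov functor $\mathrm{KZ}\colon\mathcal{O}_c(W)\to H_{q(c)}(W)\text{-mod}$ of Ginzburg--Guay--Opdam--Rouquier; this functor sends the standard module $\Delta_c(\mathrm{triv})$, of dimension $|W|$, to a module over the Hecke algebra at a parameter $q(c)$ obtained from $c$ by an exponential map with Zariski-dense image. The substantive point, due to Losev \cite{L} and complemented by Etingof--Rains \cite{ER}, is that for generic $c$ this image retains dimension exactly $|W|$, uniformly across all irreducible complex reflection groups. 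Consequently, $\dim_K H_q(W)\ge |W|$ at a dense set of characteristic-zero specializations of $A$, so the generic rank of $M$ over $A$ is at least $|W|$.

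Combining the two bounds, the spanning set provides a surjection $A^{|W|}\twoheadrightarrow M$ whose kernel has zero generic rank. Since $A$ is a Laurent polynomial ring over the field $K$, hence a Noetherian domain, any submodule of the free module $A^{|W|}$ is torsion-free, and a torsion-free module of generic rank zero must vanish. Thus the kernel is zero and $M\cong A^{|W|}$, giving freeness of rank $|W|$. The main obstacle I expect is the lower bound: the spanning step is largely a matter of assembling case-by-case results from the literature, whereas the ``generic rank $\ge |W|$'' statement requires the full strength of \cite{L} together with \cite{ER} and a careful use of the KZ functor to ensure that no dimension is lost on the standard module, uniformly across the Shephard--Todd list.
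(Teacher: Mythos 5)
Your overall strategy (upper bound by a spanning set of size $|W|$, lower bound from the Cherednik/KZ picture, then torsion-freeness over the Laurent polynomial ring $A$) is a legitimate template, and indeed \cite{BMR} already observed that spanning by $|W|$ elements implies the full conjecture. But this is exactly where your proposal has a genuine gap: for the exceptional rank~$2$ groups $G_4,\dots,G_{22}$ --- in particular $G_{17},\dots,G_{21}$ --- a spanning set of $|W|$ elements is \emph{not} available in the literature (at the time of writing it is precisely the open part of the original conjecture), so the ``upper bound'' step you describe as ``largely a matter of assembling case-by-case results'' cannot be assembled. What \cite{ER} actually provides for these groups is only that $\Bbb C\otimes_{\Bbb Z}H(W)$ is a \emph{finitely generated} $\Bbb C\otimes_{\Bbb Z}R$-module, with no control on the number of generators. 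Your surjection $A^{|W|}\twoheadrightarrow M$ therefore does not exist a priori, and the final ``kernel is torsion-free of generic rank zero'' argument has nothing to act on.

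The paper closes this gap differently. It combines finite generation (\cite{ER}) with the full strength of Losev's theorem: for every $q:R\to\Bbb C$ there is a \emph{minimal} two-sided ideal $I\subset H_q(W)$ with finite-dimensional quotient, and $\dim H_q(W)/I=|W|$. Once $H_q(W)$ is known to be finite dimensional, minimality forces $I=0$, so \emph{every} fiber has dimension exactly $|W|$; a finitely generated module with constant fiber dimension over $\Bbb C\otimes_{\Bbb Z}R$ is projective of rank $|W|$ (\cite{H}, Exercise 2.5.8(c)), and Swan's theorem over the Laurent polynomial ring then gives freeness. Note that in this scheme Losev's theorem is what supplies the \emph{upper} bound (via minimality of $I$), not the lower bound; the lower bound $\dim H_q(W)\ge |W|$ is the easier part and is already in \cite{GGOR}, since $KZ$ factors through a quotient $H_q'(W)$ of dimension $|W|$. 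Two smaller inaccuracies in your write-up: $KZ(\Delta_c(\mathrm{triv}))$ is $1$-dimensional (the dimension of $KZ(M)$ is the generic rank of $M$ over $\Bbb C[\h]$, not its dimension as a vector space), and in any case exhibiting a single $|W|$-dimensional module of $H_q(W)$ would not by itself bound $\dim H_q(W)$ from below --- one needs the statement about the quotient algebra $H_q'(W)$.
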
 

\begin{remark}\label{appl}
Theorem \ref{main} is useful in many situations, for instance in the representation theory of rational Cherednik algebras, 
where a number of results were conditional on its validity for $W$; see e.g. \cite{GGOR}, 5.4, or \cite{Sh}, Section 2. 
Also,  Theorem \ref{main} implies a positive answer to a question by Deligne and Mostow (\cite{DM}, (17.20), Question 3), which served as one of the motivations in \cite{BMR} (see \cite{BMR}, p.127). 
\end{remark}

\section{Proof of Theorem \ref{main}}

First assume that $K=\Bbb C$. 
It also suffices to assume that $W$ is irreducible. In this case, possible groups $W$ are classified by Shephard and Todd, \cite{ST}. 
Namely, $W$ belongs to an infinite series, or $W$ is one of the exceptional groups $G_n$, $4\le n\le 37$. Among these, $G_n$ with $4\le n\le 22$ are rank $2$ groups, while $G_n$ for $n\ge 23$ are of rank $\ge 3$. 

The case of the infinite series of groups is well known, see \cite{A,AK,BMR}. So it suffices to focus on the exceptional groups. 
Among these, the result is well known for Coxeter groups, which are $G_{23}=H_3$, $G_{28}=F_4$, $G_{30}=H_4$, 
$G_{35}=E_6$, $G_{36}=E_7$, $G_{37}=E_8$. 

For the groups $G_n$ for $n=24,25,26,27,29,31,32,33,34$, the result was established in \cite{MP} and references therein, see 
\cite{M3}, Subsection 4.1. Thus, Theorem \ref{main} is known (in fact, over any coefficient ring) for all $W$ except those of rank $2$. 

In the rank 2 case, the following weak version of Theorem \ref{main} was established.

\begin{theorem}\label{weak} (\cite{ER}, Theorem 6.1) If $W=G_n$, $4\le n\le 22$, then $\Bbb C\otimes_{\Bbb Z}H(W)$ is a finitely generated $\Bbb C\otimes_{\Bbb Z}R$-module. In particular, every specialization $H_q(W)$ is finite dimensional. 
\end{theorem}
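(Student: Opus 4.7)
The plan is to exploit the specific geometry of rank $2$ complex reflection groups. Every such $W \subset GL_2(\CC)$ fits into an exact sequence $1 \to Z(W) \to W \to \bar W \to 1$ in which $\bar W$ is a finite subgroup of $PGL_2(\CC)$ (cyclic, dihedral, or polyhedral). Correspondingly, the braid group $B_W = \pi_1(V_{\rm reg}/W)$ has an infinite cyclic center $\langle\beta\rangle$ generated by the \emph{full twist}, and $B_W/\langle\beta\rangle$ is a central extension of a finite spherical triangle group. This structure suggests that $H(W)$ should be finitely generated over $R[\beta]$ and that $\beta$ itself should satisfy a monic polynomial over $R$.

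Concretely, I would proceed in three steps. First, use the explicit finite presentations of $B_W$ compiled in \cite{BMR} for each of the $19$ rank $2$ exceptional groups: the generators are braided reflections $T_s$ subject to braid-like relations together with one ``Hurwitz-type'' relation expressing an explicit product of $T_s$'s as a power of $\beta$. Second, combine the Hecke relations $\prod_i(T_s - u_{s,i}) = 0$, which cap the exponent of each $T_s$, with the braid-like relations, which bound the length of words in the $T_s$'s modulo $\langle\beta\rangle$, to show that every element of $\CC \otimes_\ZZ H(W)$ is a $\CC \otimes_\ZZ R$-linear combination of products $w\beta^k$ with $w$ in an explicit finite set and $k \in \ZZ$. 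Third, prove that $\beta$ satisfies a monic polynomial identity over $\CC \otimes_\ZZ R$, which confines $\beta^k$ to a finite range and yields the desired finite generation.

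The main obstacle is the third step: algebraicity of the central element $\beta$. The route I would follow, consistent with the approach of \cite{ER}, is to produce a collection of finite-dimensional representations of the specialized Hecke algebra $H_q(W)$ at generic $q$ that is faithful in the aggregate, which forces $\beta$ to act by an algebraic endomorphism and hence satisfy a monic polynomial over the generic stalk of $R$; one then globalizes via generic flatness. A natural source for such representations is the Knizhnik--Zamolodchikov monodromy on $V_{\rm reg}/W$ arising from rational Cherednik algebras at generic parameter, combined with the classical specialization where $H(W)$ degenerates to $\CC[W]$ and $\beta$ has finite order. The first two steps are routine in spirit but intricate in practice, since the presentations in \cite{BMR} and the resulting case analysis differ for each of the $19$ groups.
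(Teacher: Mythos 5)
Your overall strategy coincides with the one actually used in \cite{ER} and summarized in item 5 of the closing Remark of this note: pass to a central element coming from the scalars in $W$ (your full twist $\beta$ is, up to taking a power, the element called $Z$ there), show by a word--length argument in the finite quotient $G=W/\langle Z\rangle\subset PGL_2(\CC)$ that $\CC\otimes_{\ZZ}H(W)$ is generated by finitely many elements over $\CC\otimes_{\ZZ}R[Z,Z^{-1}]$ (in \cite{ER} this is organized via the length function of the Coxeter groups of type $A_3$, $B_3$, $H_3$, whose rotation subgroups are the possible $G$), and finally prove that the central element is integral over $R$. Your first two steps are the paper's first two steps, and your assessment that they are ``routine in spirit but intricate in practice'' is accurate.

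The genuine gap is in your third step. You propose to establish algebraicity of $\beta$ from a collection of finite-dimensional representations of $H_q(W)$ that is ``faithful in the aggregate'', sourced from KZ monodromy of rational Cherednik algebras together with the classical specialization. This is circular. All KZ representations factor through the quotient $H_q'(W)$ of dimension $|W|$ (item 4 of the Remark), and, more generally, by Losev's Theorem \ref{Los} the common kernel of \emph{all} finite-dimensional representations of $H_q(W)$ is precisely the ideal $I$; hence joint faithfulness of any such family is \emph{equivalent} to $I=0$, i.e., to the finite-dimensionality you are trying to prove. The classical specialization to $\CC[W]$ controls only one point of ${\rm Spec}(R)$ and cannot propagate a monic relation for $\beta$ to other parameters. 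The repair is that after your second step you already hold a faithful finite representation for free: the left regular representation of $\CC\otimes_{\ZZ}H(W)$ as a module over the central subalgebra $\CC\otimes_{\ZZ}R[Z,Z^{-1}]$ is finitely generated by step two and tautologically faithful. Taking determinants there of the relation expressing a power of $Z$ as an explicit product of the generators $T_s$, and using that each $\det(T_s)$ is a monomial in the units $u_{s,i}$ (since $T_s$ satisfies $\prod_i(T_s-u_{s,i})=0$), yields $Z^d$ equal to a unit of $\CC\otimes_{\ZZ}R$ for some $d$. This is exactly the mechanism behind the phrase ``taking the determinant of the braid relation'' in the outline of \cite{ER}; no external representation theory, and in particular no input from Cherednik algebras, is needed or indeed usable at this stage.
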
 

Theorem \ref{main} now follows from Theorem \ref{weak} and  the following theorem due to I. Losev. 

\begin{theorem}\label{Los} (\cite{L}, Theorem 1.1) For any $W$ and any $q: R\to \Bbb C$,  
there is a minimal two-sided ideal $I\subset H_q(W)$ such that 
$H_q(W)/I$
is finite dimensional. Moreover, we have
${\rm dim}H_q(W)/I=|W|$. 
\end{theorem}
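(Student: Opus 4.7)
The plan is to use the Knizhnik--Zamolodchikov (KZ) functor from the category $\O$ of the rational Cherednik algebra $H_{1,c}(W)$ at $t=1$, with parameter $c$ chosen so that the monodromy of the universal KZ connection on $V_{\mathrm{reg}}/W$ realizes the given $q : R \to \CC$ (via the identification of parameters in \cite{GGOR}). Recall that $\O_c(W)$ is a highest weight category, with standard modules $\Delta_c(\tau)$ and simple modules $L_c(\tau)$ both indexed by $\tau \in \Irr(W)$; the functor $\mathrm{KZ}: \O_c(W) \to H_q(W)\text{-mod}^{\mathrm{fd}}$ is exact and sends each $\Delta_c(\tau)$ to a finite-dimensional $H_q(W)$-module of dimension $\dim \tau$.

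The first step would be to define the candidate ideal $I := \mathrm{ann}_{H_q(W)}(\mathrm{KZ}(P))$, where $P$ is a projective generator of $\O_c$, and to put $A := H_q(W)/I$. Since $\mathrm{KZ}(P)$ is finite-dimensional and $A$ embeds into $\End_\CC(\mathrm{KZ}(P))$, the quotient $A$ is automatically finite-dimensional. To verify that $I$ is the minimum two-sided ideal with finite-dimensional quotient, I would argue that every finite-dimensional $H_q(W)$-module is a monodromy representation of $B_W$, hence lies in the essential image of $\mathrm{KZ}$, and so arises as a quotient of $\mathrm{KZ}(P)^{\oplus n}$ for some $n$; consequently, any two-sided ideal $J$ with $H_q(W)/J$ finite-dimensional must contain $I$.

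The heart of the argument, and the main obstacle, is to show $\dim A = |W|$. The approach is to apply the double centralizer theorem to the bimodule $\mathrm{KZ}(P)$ over $(H_q(W),E)$, where $E := \End_{\O_c}(P)^{\mathrm{op}}$ is Morita equivalent to $\O_c$, yielding an identification $A \cong \End_E(\mathrm{KZ}(P))$. To compute this dimension, I would set up a flat family of Cherednik algebras over the $c$-parameter space, prove flatness of the associated family of algebras $A$, and reduce to $c = 0$, where $\O_0 \simeq \CC[W]\text{-mod}$ and $A \cong \CC[W]$ has dimension exactly $|W|$. The genuine difficulty lies here: upper semicontinuity of $\dim A$ in $c$ is immediate (since $A$ is a quotient of the fixed algebra $H_q(W)$), but the lower bound $\dim A \ge |W|$ requires real representation-theoretic input, for instance the construction of a flat family of $|W|$-dimensional $A$-modules, or a character-theoretic identity showing that the total dimension of the image of $\mathrm{KZ}$ is $|W|$.
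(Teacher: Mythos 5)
Your overall architecture --- set $q=e^{2\pi i c}$, pass to the category $\O_c$ of the rational Cherednik algebra, and take $I$ to be the annihilator of the image of a projective generator under the KZ functor --- is the strategy of Losev's proof as outlined in the paper. But the step you dispose of in one clause, namely that ``every finite-dimensional $H_q(W)$-module is a monodromy representation of $B_W$, hence lies in the essential image of $\mathrm{KZ}$,'' is precisely the content of the theorem. Every finite-dimensional $H_q(W)$-module is tautologically a monodromy representation of $B_W=\pi_1(V_{\rm reg}/W)$, i.e.\ a local system on $V_{\rm reg}/W$; the issue is whether that local system is the localization of an object of $\O_c(W)$. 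Losev proves this by taking the flat bundle $N$ with regular singularities on $\h^{\rm reg}$ attached to $V$ by Deligne's Riemann--Hilbert correspondence, extending it to a bundle $\widetilde N$ on the locus $\h^{\rm sr}$ obtained by removing only the pairwise intersections of reflection hyperplanes, compatibly with the $\mathbf{H}_c(W)$-action (a local analysis near a single hyperplane, where the relation $\prod_i(T_s-u_{s,i})=0$ enters), and then showing that $M:=\Gamma(\h^{\rm sr},\widetilde N)$ lies in $\O_c(W)$ and satisfies $\mathrm{KZ}(M)=V$. None of this appears in your proposal, and without it you do not even know that a minimal cofinite two-sided ideal exists: an infinite intersection of cofinite two-sided ideals need not be cofinite, so ``minimal $I$ with $\dim H_q(W)/I<\infty$'' has to be produced, not just named.

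The second gap is one you flag yourself: the lower bound in $\dim H_q(W)/I=|W|$. Your proposed flat-family degeneration to $c=0$ is both harder than necessary and left incomplete; the count is already in \cite{GGOR} and is formal once one knows that $\O_c$ is a highest weight category with BGG reciprocity and that $\dim\mathrm{KZ}(\Delta_c(\tau))=\dim\tau$. Writing $P_{\mathrm{KZ}}=\bigoplus_\tau P(\tau)^{\oplus d_\tau}$ with $d_\tau=\dim\mathrm{KZ}(L(\tau))$, one computes
\[
\dim\End(P_{\mathrm{KZ}})=\sum_{\mu}\Bigl(\sum_\sigma d_\sigma\,[\Delta(\mu):L(\sigma)]\Bigr)^2=\sum_\mu\bigl(\dim\mathrm{KZ}(\Delta(\mu))\bigr)^2=\sum_\mu(\dim\mu)^2=|W|,
\]
using $\dim\Hom(P(\sigma),P(\tau))=[P(\tau):L(\sigma)]$, reciprocity $[P(\tau):\Delta(\mu)]=[\Delta(\mu):L(\tau)]$, and exactness of $\mathrm{KZ}$; the double centralizer property of the quotient functor $\mathrm{KZ}$ then identifies $H_q(W)/I$ with $\End(P_{\mathrm{KZ}})^{\rm op}$. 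So the ``real representation-theoretic input'' you are missing is BGG reciprocity, not a deformation argument. As written, both the existence and minimality of $I$ and the equality $\dim H_q(W)/I=|W|$ are asserted rather than proved.
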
 

Namely, Theorem \ref{weak} and Theorem \ref{Los} imply that for any character $q: R\to \Bbb C$, the specialization 
$H_q(W)$ has dimension $|W|$. This implies that for $K=\Bbb C$ the algebra $K\otimes_{\Bbb Z}H(W)$ is a projective $K\otimes_{\Bbb Z}R$-module of rank $|W|$ (\cite{H}, Exercise 2.5.8(c)). Hence the same is true for any field $K$ of characteristic zero. 
But by Swan's theorem (\cite{La}, Corollary 4.10), any finitely generated projective module over the algebra of Laurent polynomials over a field is free.  Hence, the algebra $K\otimes_{\Bbb Z}H(W)$ is a free $K\otimes_{\Bbb Z}R$-module of rank $|W|$ 
(cf. also \cite{M1}, Proposition 2.5). This proves Theorem \ref{main}. 

\begin{corollary} Let $K=\Bbb Z[1/N]$ for $N\gg 0$. Then $K\otimes_{\Bbb Z}H(W)$ is a free $K\otimes_{\Bbb Z}R$-module of rank $|W|$. 
Hence the same holds when $K$ is a field of sufficiently large positive characteristic. 
\end{corollary}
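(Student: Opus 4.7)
The plan is to start from Theorem \ref{main} applied to $K=\mathbb{Q}$ and use that the braid group $B_W$ is finitely generated to reduce the problem to clearing finitely many denominators. From Theorem \ref{main} I obtain a $(\mathbb{Q}\otimes_{\mathbb{Z}} R)$-basis $b_1,\dots,b_{|W|}$ of $\mathbb{Q}\otimes_{\mathbb{Z}} H(W)$; scaling each $b_i$ by a nonzero integer if necessary, I may assume $b_i\in H(W)$ without losing the basis property.

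Since $B_W$ is generated as a group by the standard braid generators $T_{s_1},\dots,T_{s_k}$, the algebra $H(W)$ is generated as an $R$-algebra by these $T_{s_j}$. Moreover, the Hecke relation $\prod_i(T_{s_j}-u_{s_j,i})=0$ combined with the invertibility of the $u_{s,i}$ in $R$ shows that $T_{s_j}^{-1}$ is a polynomial in $T_{s_j}$ with coefficients in $R$. Consequently, any $R$-submodule of $H(W)$ that contains $1$ and is closed under right multiplication by each $T_{s_j}$ automatically coincides with $H(W)$.

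I would then expand, inside $\mathbb{Q}\otimes_{\mathbb{Z}} H(W)$, the finitely many elements
\[
1=\sum_l \alpha_l\,b_l,\qquad b_iT_{s_j}=\sum_l c^{ij}_l\,b_l,
\]
with coefficients $\alpha_l,\, c^{ij}_l\in\mathbb{Q}\otimes_{\mathbb{Z}} R$, and choose a positive integer $N$ clearing all their denominators, so that $N\alpha_l$ and $Nc^{ij}_l$ lie in $R$ for every $l,i,j$. Over $\mathbb{Z}[1/N]$, the $(\mathbb{Z}[1/N]\otimes_{\mathbb{Z}} R)$-span of $b_1,\dots,b_{|W|}$ then contains $1$ and is stable under multiplication by every $T_{s_j}$, so by the previous paragraph it equals $\mathbb{Z}[1/N]\otimes_{\mathbb{Z}} H(W)$. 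This yields a surjection
\[
\phi\colon (\mathbb{Z}[1/N]\otimes_{\mathbb{Z}} R)^{|W|}\twoheadrightarrow \mathbb{Z}[1/N]\otimes_{\mathbb{Z}} H(W).
\]

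By construction, $\phi\otimes_{\mathbb{Z}}\mathbb{Q}$ is an isomorphism, so $\Ker\phi$ is $\mathbb{Q}$-torsion. But $\Ker\phi$ lies inside the $\mathbb{Z}[1/N]$-flat module $(\mathbb{Z}[1/N]\otimes_{\mathbb{Z}} R)^{|W|}$, hence embeds in its rationalization, which forces $\Ker\phi=0$. Thus $\phi$ is an isomorphism and the claim holds for $K=\mathbb{Z}[1/N]$; for any field $K$ of characteristic $p\nmid N$ the ring map $\mathbb{Z}[1/N]\to K$ permits a further base change, yielding the positive-characteristic statement. The only real subtlety is the denominator-clearing step, and it is under control precisely because of the finite generation of $B_W$ together with the algebraicity of $T_{s_j}^{-1}$ in $T_{s_j}$; no input beyond Theorem \ref{main} is required.
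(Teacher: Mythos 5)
Your overall strategy --- take a $\Bbb Q\otimes_{\Bbb Z}R$-basis of $\Bbb Q\otimes_{\Bbb Z}H(W)$ supplied by Theorem \ref{main} and clear finitely many denominators --- is the same as the paper's, but there is a genuine gap at the surjectivity step, and it is precisely the point the paper's proof is built to handle. Your identities $1=\sum_l\alpha_l b_l$ and $b_iT_{s_j}=\sum_l c^{ij}_l b_l$ are established in $\Bbb Q\otimes_{\Bbb Z}H(W)$, and after clearing denominators you need them to hold in $\Bbb Z[1/N]\otimes_{\Bbb Z}H(W)$. But the natural map $\Bbb Z[1/N]\otimes_{\Bbb Z}H(W)\to\Bbb Q\otimes_{\Bbb Z}H(W)$ need not be injective: $H(W)$ is a quotient of the free $\Bbb Z$-module $RB_W$ by a two-sided ideal and may a priori have $\Bbb Z$-torsion not invertible in $\Bbb Z[1/N]$ (the absence of such torsion is part of what the integral conjecture asserts). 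All you can conclude is that $N\cdot 1-\sum_l(N\alpha_l)b_l$ and $Nb_iT_{s_j}-\sum_l(Nc^{ij}_l)b_l$ are torsion elements, so your span might contain $1+t$ for some nonzero torsion $t$ rather than $1$ itself. You do confront torsion when proving injectivity of $\phi$, but there it is harmless because $\Ker\phi$ sits inside a free module; the dangerous torsion lives in the \emph{target} of $\phi$ and obstructs surjectivity, which your argument leaves open.

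The repair requires an extra integral input, and this is exactly what the paper supplies: Theorem \ref{weak} is valid over $\Bbb Z$, i.e., $H(W)$ is a \emph{finite module} over the Noetherian ring $R$, so by Grothendieck's Generic Freeness Lemma there is an $L>0$ such that $H(W)[1/L]$ is free, hence torsion-free, over $\Bbb Z[1/L]$; only after inverting $L$ do relations proved in $\Bbb Q\otimes_{\Bbb Z}H(W)$ descend to $H(W)[1/LD]$. (Equivalently, module-finiteness over $R$ makes the $\Bbb Z$-torsion ideal finitely generated and therefore killed by a single integer, which you could absorb into $N$.) Your substitute input --- that $B_W$ is generated by the braid reflections $T_s$, a nontrivial theorem of Brou\'e--Malle--Rouquier and itself external to Theorem \ref{main} --- only yields that $H(W)$ is a finitely generated $R$-\emph{algebra}, which does not obviously bound the torsion; so the integral form of Theorem \ref{weak} should be cited in any case. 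The remaining steps (inverting $T_s$ via the Hecke relation, the injectivity of $\phi$, and the base change to fields of characteristic prime to $N$) are correct.
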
 

\begin{proof} 
Theorem \ref{weak} is valid (with the same proof) over any coefficient ring (see e.g. \cite{M1}, Theorem 2.14), i.e., for any $W$, the algebra $H(W)$ is module-finite over $R$. Hence by Grothendieck's  Generic Freeness Lemma (\cite{E}, Theorem 14.4), 
there exists an integer $L>0$ such that $H(W)[1/L]$ is a free $\Bbb Z[1/L]$-module. 

Now let $v_1,...,v_r$ be generators of $H(W)$ over $R$, 
and $e_i,...,e_{|W|}\in H(R)$ be elements defining a basis of $\Bbb Q\otimes_{\Bbb Z}H(W)$ over $\Bbb Q\otimes_{\Bbb Z}R$
(they exist by Theorem \ref{main}). Then $v_i=\sum_j a_{ij}e_j$ for some 
$a_{ij}\in \Bbb Q\otimes_{\Bbb Z}R$. So for some integer $D>0$ we have $Dv_i=\sum_j b_{ij}e_j$, with $b_{ij}\in R$. 
Since $H(W)[1/L]$ is a free $\Bbb Z[1/L]$-module, the same relation holds in $H(W)[1/L]$. Thus, for $N=LD$, 
$H(W)[1/N]$ is a free $R[1/N]$-module with basis $e_1,...,e_{|W|}$.  
\end{proof} 

\begin{remark} 1. The proof of Theorem \ref{main} does not extend to positive characteristic, since the proof of Theorem \ref{Los} uses complex analysis (the Riemann-Hilbert correspondence). 

2. The last step of the proof of Theorem \ref{main} (Swan's theorem) is really needed for purely aesthetic purposes, to establish the original formulation of the conjecture on the nose. As usual, for practical purposes it is normally sufficient to know only that the algebra $K\otimes_{\Bbb Z}H(W)$ is a {\it projective} $K\otimes_{\Bbb Z}R$-module. In fact, for most applications, including the ones mentioned in Remark \ref{appl}, already Losev's Theorem \ref{Los} is sufficient. 

3. One would like to have a stronger version of Theorem \ref{main}, giving a set-theoretical splitting $W\to B_W$ of the homomorphism $\pi$  
whose image is a basis of the Hecke algebra. For instance, when $W$ is a Coxeter group, then such a splitting is well known and is obtained by taking reduced expressions in the braid group. Such a version is currently available (over any base ring) for all irreducible complex reflection groups except $G_{17},...,G_{21}$, see \cite{M3}, \cite{Cha1,Cha2}. 

4. Here is an outline of the proof of Theorem \ref{Los} given in \cite{L}. Let
$q=e^{2\pi ic}$, and let $\bold H_c(W)$ be the rational Cherednik
algebra of $W$ with parameter $c$, \cite{GGOR}.  Let
$M\in {\mathcal O}_c(W)$ be a module from the category $\mathcal O$ for
this algebra. It is shown in \cite{GGOR} that the localization of $M$
to the set $\h^{\rm reg}$ of regular points of the reflection
representation $\h$ of $W$ is a vector bundle on $\h^{\rm reg}$ 
with a flat connection.
So for every $x\in \h^{\rm reg}$ we get a monodromy representation of
the braid group $\pi_1(\h^{\rm reg}/W)$ on the fiber $M_x$, which is shown in \cite{GGOR} to
factor through $H_q(W)$.  This representation is denoted by $KZ(M)$,
and the functor $M\mapsto KZ(M)$ is called the Knizhnik-Zamolodchikov
(KZ) functor. It is shown in \cite{GGOR} that the representation
$KZ(M)$ of $H_q(W)$ factors through a certain quotient $H_q'(W)$ of
$H_q(W)$ of dimension $|W|$.  Thus, Theorem \ref{Los} is equivalent to
the statement that every finite dimensional representation of $H_q(W)$
is of the form $KZ(M)$ for some $M$.

To show this, let $\h^{\rm sr}$ be the complement of the
intersections of pairs of distinct reflection hyperplanes in $\h$.
Take a finite dimensional representation $V$ of
$H_q(W)$, and let $N=N_V$ be the vector bundle with a flat connection with
regular singularities on $\h^{\rm reg}$ corresponding to $V$ under Deligne's
multidimensional Riemann-Hilbert correspondence. One then extends $N$
to a vector bundle $\widetilde N$ on $\h^{\rm sr}$ compatibly with the
$\bold H_c(W)$-action. One then defines
$M:=\Gamma(\h^{\rm sr},\widetilde N)$ and shows that
$M\in {\mathcal O}_c(W)$ and $KZ(M)=V$, as desired.

5. Here is an outline of the proof of Theorem \ref{weak} given in \cite{ER}. For the infinite series of complex reflection groups the result was proved in \cite{BMR}. 
Thus, let $W\subset GL_2(\Bbb C)$  be an exceptional complex reflection group of rank $2$, of type $G_4,...,G_{22}$. 
Then the intersection of $W$ with the scalars is a finite cyclic group generated by an element $Z$. This element defines a central element of $H_q(W)$, 
which we will also call $Z$. Let $W/\langle Z\rangle=G\subset PGL_2(\Bbb C)=SO_3(\Bbb C)$. Then $G$ is the group of even elements 
in a Coxeter group of type $A_3$, $B_3$, or $H_3$. Using the theory of length in these Coxeter groups, 
it is shown that $\Bbb C\otimes_{\Bbb Z}H(W)$ is generated by $|G|$ elements as a module over $\Bbb C\otimes_{\Bbb Z}R[Z,Z^{-1}]$.
Moreover, taking the determinant of the braid relation of this algebra in its finite dimensional representations, we find that 
$Z^d$ is an element of $\Bbb C\otimes_{\Bbb Z}R$ for some $d$. This implies that $\Bbb C\otimes_{\Bbb Z}H(W)$ is a finite rank module 
over $\Bbb C\otimes_{\Bbb Z}R$, as desired. 

We note that this argument  works over an arbitrary base ring. A much more detailed description of this argument 
is given in \cite{Cha2}.    
\end{remark}

{\bf Acknowledgements.} The author thanks I. Marin for many useful comments and references.

\end{document}